\documentclass[12pt]{amsart}

\usepackage[a4paper,margin=2.3cm]{geometry}
\usepackage{amsmath,amssymb,amsthm,mathtools}
\usepackage{enumitem}
\usepackage{hyperref}
\hypersetup{hidelinks}

\newtheorem{theorem}{Theorem}[section]
\newtheorem{proposition}[theorem]{Proposition}
\newtheorem{lemma}[theorem]{Lemma}

\newtheorem{conjecture}[theorem]{Conjecture}
\theoremstyle{definition}

\theoremstyle{remark}
\newtheorem{remark}[theorem]{Remark}
\numberwithin{equation}{section}

\DeclareMathOperator{\spr}{\rho}

\newcommand{\R}{\mathbb R}
\newcommand{\N}{\mathbb N}
\newcommand{\Z}{\mathbb Z}
\newcommand{\A}{\mathcal A}

\newcommand{\norm}[1]{\left\lVert #1\right\rVert}

\title[Schauder-Basis
Properties of the Daubechies Wavelet Packets]{On a Conjecture about Schauder-Basis
Properties of the Daubechies Wavelet Packets}
\author[M.\ Nielsen]{Morten Nielsen }
\thanks{This work was supported by the Independent Research Fund Denmark, grant no.\ 5281-00046B}
\address{Department of Mathematical Sciences\\ Aalborg
  University\\ Thomas Manns Vej 23\\ DK-9220 Aalborg East\\ Denmark}
\email{mnielsen@math.aau.dk}
\subjclass[2020]{Primary 42C40; Secondary 46B15, 37D35, 15A18}
\keywords{Wavelet packets, Schauder basis, joint spectral radius, matrix pressure, thermodynamic formalism, equilibrium states, Daubechies wavelets, subdivision operators}

\begin{document}
\begin{abstract}
Nielsen and Zhou (\emph{Mean size of wavelet packets}, ACHA 13 (2002), 22--34)
conjectured that for every Daubechies filter of length at least four the
associated wavelet packets fail to be a Schauder basis of $L^p(\R)$ for every
$p\neq2$; their own $\ell^1/\ell^\infty$ estimate only reaches extreme
exponents. We prove the Schauder-basis-failure half of the conjecture in full,
for every $1<p<\infty$ with $p\neq2$, in the length-four case. The proof
combines convexity of the wavelet packet pressure function with the uniqueness
of equilibrium states for irreducible matrix families due to Feng and
K\"aenm\"aki (2011) and an exact algebraic separation of two periodic
spectral growth rates of the high-pass transition matrices. 
\end{abstract}

\maketitle

\section{Introduction}

Let $a=\{a(k)\}_{k\in\Z}$ be a finitely supported refinement mask satisfying
$\sum_k a(k)=2$ and $\sum_k a(k)a(k+2j)=2\delta_{j,0}$, and let $\phi$ be the
associated scaling function, the compactly supported solution of
$\phi(x)=\sum_{k}a(k)\phi(2x-k)$ with orthonormal integer shifts. Writing
$b(k)=(-1)^ka(1-k)$ for the high-pass mask and $\psi(x)=\sum_k b(k)\phi(2x-k)$
for the wavelet, $\{2^{j/2}\psi(2^jx-k)\}_{j,k\in\Z}$ is an orthonormal
wavelet basis of $L^2(\R)$.

Coifman, Meyer and Wickerhauser~\cite{CMW} refined this construction by
iterating \emph{both} filters at every scale. Set $w_0=\phi$, $w_1=\psi$, and
\begin{equation}\label{eq:packet-recursion}
        w_{2n}(x)=\sum_{k}a(k)w_n(2x-k),
        \qquad
        w_{2n+1}(x)=\sum_{k}b(k)w_n(2x-k),
        \qquad n\in\Z_{\ge0}.
\end{equation}
The functions $\{w_n:n\in\Z_{\ge0}\}$ are the \emph{wavelet packets}
associated with $a$, and their shifts again form an orthonormal basis of
$L^2(\R)$, the \emph{orthonormal wavelet packet basis}. The branch
$\{w_{2^n-1}\}_{n\ge1}$, obtained by iterating only the high-pass filter in
\eqref{eq:packet-recursion}, is exactly the object controlled by the matrices
studied below; S\'er\'e~\cite{Sere} showed it is generically the
worst-localized branch of the tree.

Since the wavelet packets form an orthonormal basis of $L^2(\R)$, it is
natural to ask whether they also form a Schauder basis of $L^p(\R)$ for
$p\neq2$. Paley~\cite{Paley} showed that for the Haar wavelet the wavelet
packets constitute the Walsh system, a Schauder basis of $L^p(\R)$ for every
$1<p<\infty$. In sharp contrast, Coifman, Meyer and Wickerhauser~\cite{CMW},
sharpened by Fan~\cite{Fan}, showed the Meyer wavelet packets are not
uniformly bounded in $L^p$ for large $p$, and Nielsen~\cite{NielsenRMI}
proved the analogous failure of the Schauder-basis property for large $p$ for
Daubechies, least-asymmetric, and Coiflet wavelet packets. All of these
negative results rest on estimating the growth of $L^p$ norms along the
high-pass branch of the wavelet packet tree.

\subsection{The mean size of a subdivision tree and the joint spectral radius}

Nielsen and Zhou~\cite{NielsenZhou} organized this circle of ideas into a
single quantitative framework, the \emph{mean size} of a subdivision tree.
Given a finite family $M=\{a_\varepsilon:\varepsilon\in E\}$ of refinement
masks and a function $\psi$, the subdivision tree $T(M,\psi)$ consists of the
functions $\psi_{\varepsilon_1,\dots,\varepsilon_n}$, $n\ge0$,
$\varepsilon_1,\dots,\varepsilon_n\in E$, defined recursively by
$\psi_{\varepsilon_1,\dots,\varepsilon_n}(x)=\sum_{\alpha}a_{\varepsilon_1}(\alpha)
\psi_{\varepsilon_2,\dots,\varepsilon_n}(2x-\alpha)$, and the mean $L^p$ size
of the tree on scale $n$ is
\begin{equation*}
        M_p(M,\psi):=\lim_{n\to\infty}
        \Bigl(\sum_{\varepsilon_1,\dots,\varepsilon_n}
        \norm{\psi_{\varepsilon_1,\dots,\varepsilon_n}}_p^p\Bigr)^{1/np}.
\end{equation*}
The central result of \cite{NielsenZhou} identifies this limit with the
\emph{$p$-norm joint spectral radius}~\cite{RotaStrang,DL,Wang,Jia} of a
finite family of transition matrices built from $M$: for $\varepsilon\in E$
one sets
\begin{equation}\label{eq:CepsDef}
        C^\varepsilon
        =\bigl\{\bigl(a_\varepsilon(\eta+2\alpha-\beta)\bigr)_{\alpha,\beta}
        :\eta\in\{0,1\}\bigr\},
\end{equation}
and Nielsen and Zhou's Theorem~2 and Corollary~3.1 give, for the branch
obtained by repeating a single symbol $\varepsilon$,
\begin{equation}\label{eq:branch-mean-size}
        M_p^\varepsilon(M,\psi)
        :=\lim_{n\to\infty}
        \norm{\psi_{\varepsilon,\dots,\varepsilon}}_p^{1/n}
        =2^{-1/p}\rho_p(C^\varepsilon).
\end{equation}

\subsection{The Schauder-basis necessary condition and the conjecture}

Using \eqref{eq:branch-mean-size} together with a duality argument for
Schauder bases, Nielsen and Zhou~\cite[Lemma~3.1]{NielsenZhou} proved the
following necessary condition.

\begin{lemma}\label{lem:NZ-necessary}
Suppose the collection of wavelet packets extracted from the branch
$T^\varepsilon(M,\psi)$ is a subset of an orthonormal basis $\mathbb B$ of
$L^2(\R)$ with dense span in $L^p(\R)$, $1\le p<\infty$, satisfying the
uniform lower norm bound \eqref{eq:branch-mean-size} requires. Then a
necessary condition for $\mathbb B$ to be a Schauder basis of $L^p(\R)$ is
\begin{equation*}
        \rho_p(C^\varepsilon)\rho_q(C^\varepsilon)=2,
        \qquad \frac1p+\frac1q=1.
\end{equation*}
\end{lemma}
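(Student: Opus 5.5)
The plan is to combine three facts about the branch functions $f_n:=\psi_{\varepsilon,\dots,\varepsilon}$ ($n$ repetitions): orthonormality in $L^2$, the growth rates \eqref{eq:branch-mean-size} in $L^p$ and $L^q$, and the uniform bound on coefficient functionals of a Schauder basis. Each $f_n$ belongs to $\mathbb B$ and is $L^2$-normalized, with $\norm{f_n}_2=1$ (up to the fixed normalization of $\psi$). If $\mathbb B$ is a Schauder basis of $L^p(\R)$, its biorthogonal coefficient functionals are the $L^2$ pairings $g\mapsto\langle g,f_n\rangle$, because the $L^2$ expansion and the $L^p$ expansion agree on the dense span. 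Uniform boundedness of the partial sum projections $S_N$ gives $\norm{P_n}_{p\to p}\le 2K$ for the rank-one projections $P_ng=\langle g,f_n\rangle f_n$, where $K$ is the basis constant. Since $\norm{P_n}_{p\to p}=\norm{f_n}_p\norm{f_n}_q$, we get
\begin{equation*}
1=\langle f_n,f_n\rangle\le\norm{f_n}_p\norm{f_n}_q\le 2K
\end{equation*}
for all $n$, where the lower bound is H\"older.

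Take $n$-th roots and let $n\to\infty$. By \eqref{eq:branch-mean-size} applied with exponents $p$ and $q$,
\begin{equation*}
\lim_{n\to\infty}\bigl(\norm{f_n}_p\norm{f_n}_q\bigr)^{1/n}=2^{-1/p}\rho_p(C^\varepsilon)\,2^{-1/q}\rho_q(C^\varepsilon)=\tfrac12\rho_p(C^\varepsilon)\rho_q(C^\varepsilon).
\end{equation*}
The two-sided bound $1\le\cdot\le 2K$ forces this limit to equal $1$, hence $\rho_p(C^\varepsilon)\rho_q(C^\varepsilon)=2$. The case $p=1$, $q=\infty$ needs \eqref{eq:branch-mean-size} at $q=\infty$ with the convention $2^{-1/\infty}=1$; I will check that the cited result covers $\infty$, or else restrict the argument to $1<p<\infty$.

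I expect the main obstacle to be justifying the two identifications in the first step, and this is where the hypotheses (density of the span in $L^p$, orthonormality of $\mathbb B$) are used. First, the coefficient functionals of the Schauder basis must coincide with $\langle\cdot,f_n\rangle$. This holds because both are linear on the span of $\mathbb B$ and agree there by orthonormality, and the Schauder functionals are continuous, so they extend uniquely. Second, I need $f_n\in L^q$ and $\norm{P_n}=\norm{f_n}_p\norm{f_n}_q$. This follows because the functional $\langle\cdot,f_n\rangle$ is bounded on $L^p$ with norm exactly $\norm{f_n}_q$, by $L^p$–$L^q$ duality, and compact support of $f_n$ ensures $f_n\in L^p\cap L^q$ in the first place.

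The upper bound is the genuinely Schauder-basis ingredient. The lower bound $\ge1$ follows from H\"older alone, and it yields the general inequality $\rho_p\rho_q\ge2$.
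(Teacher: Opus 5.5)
Your proposal is correct and is essentially the duality argument the paper relies on; the paper gives no separate proof and simply cites Nielsen--Zhou's Lemma~3.1, described as \eqref{eq:branch-mean-size} plus a Schauder-basis duality argument. The two-sided bound $1\le\norm{f_n}_p\norm{f_n}_q\le 2K$ on the rank-one coefficient projections, combined with \eqref{eq:branch-mean-size} at the exponents $p$ and $q$, forces $\tfrac12\rho_p(C^\varepsilon)\rho_q(C^\varepsilon)=1$ exactly as you wrote.

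Two small remarks, neither a real gap. Your caveat at $p=1$ does not affect the paper, which only uses $1<p<\infty$. And $f_n\in L^q$ needs boundedness of the packets together with compact support (or follows implicitly from \eqref{eq:branch-mean-size} at $q$); compact support alone does not give it.
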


Specializing to $\varepsilon=1$, the high-pass branch, gives
\cite[Remark~3.3]{NielsenZhou}: for the orthonormal wavelet packets
$\{w_n\}_{n\ge0}$ built from a low-/high-pass filter pair with dense span in
$L^p(\R)$, a necessary condition for $\{w_n\}_{n\ge0}$ to be a Schauder basis
of $L^p(\R)$ is
\begin{equation}\label{eq:NZ-necessary-1}
        \rho_p(C_1)\rho_{p'}(C_1)=2,
        \qquad \frac1p+\frac1{p'}=1,
\end{equation}
where we abbreviate $C_1:=C^1$.

Nielsen and Zhou tested \eqref{eq:NZ-necessary-1} on the one-parameter family
of length-four quadrature mirror filters, using an $\ell^1/\ell^\infty$
estimate of $\rho_1(C_1)\rho_\infty(C_1)$ together with the log-convexity of
$p\mapsto\rho_p(C_1)$. This shows \eqref{eq:NZ-necessary-1} fails, and hence
that the wavelet packets are not a Schauder basis of $L^p(\R)$, once $p$ is
close enough to $1$ or to $\infty$; for the Daubechies length-four filter,
this covers a numerically determined range of extreme exponents. What happens
for $p$ near $2$ was left open, and recorded as a conjecture:

\begin{conjecture}[Nielsen--Zhou~{\cite[\S3.2]{NielsenZhou}}]\label{conj:NZ}
The basic wavelet packets associated with a Daubechies filter of length at
least $4$ will fail to be a Schauder basis for $L^p$ when $p\neq2$, and the
functions will not be uniformly bounded in $p$-mean across scales for any
$p>2$.
\end{conjecture}

\subsection{Contribution of this note}

We prove the first assertion of Conjecture~\ref{conj:NZ} in full, for every
$1<p<\infty$ with $p\neq2$, in the length-four case: the inequality
$\rho_p(C_1)\rho_{p'}(C_1)>2$ of Lemma~\ref{lem:NZ-necessary} holds strictly for
\emph{every} $p\neq2$, not merely near the endpoints. This requires a global
statement about the pressure function
\begin{equation}\label{eq:pressure-intro}
        P(s):=\lim_{n\to\infty}\frac1n\log
        \sum_{\omega\in\{0,1\}^n}\norm{B_\omega}^s,
        \qquad B_\omega:=B_{\omega_n}\cdots B_{\omega_1},
\end{equation}
of the pair $\{B_0,B_1\}=C_1$ of high-pass transition matrices --- that $P$
has \emph{no affine segment} on $(0,\infty)$ --- which a short convexity
argument in Section~\ref{sec:setup} then upgrades to a strict inequality on
the whole range $p\neq2$. This global statement is obtained from the
uniqueness of equilibrium states for irreducible matrix families due to Feng
and K\"aenm\"aki~\cite{FengKaenmaki}, 
combined with an exactly verifiable separation of two periodic spectral
growth rates of $B_0,B_1$, see Sections~\ref{sec:FK} and~\ref{sec:D4}.  

\section{The pressure of the high-pass branch}\label{sec:setup}

By \eqref{eq:branch-mean-size} with $\varepsilon=1$, the pressure $P$
\eqref{eq:pressure-intro} of $C_1=\{B_0,B_1\}$, where $B_0$ and $B_1$ are
constructed explicitly in Section~\ref{sec:D4}, satisfies
\begin{equation}\label{eq:rho-p-pressure}
        \log\rho_p(C_1)=\frac{P(p)}p,
        \qquad 1<p<\infty,
\end{equation}
using $\rho_p(C_1)=\lim_n(\sum_\omega\norm{B_\omega}^p)^{1/np}$, so
\eqref{eq:NZ-necessary-1} is a statement about $P$ alone. Likewise at $p=2$,
\eqref{eq:branch-mean-size} gives $M_2^1(M,\psi)=2^{-1/2}\rho_2(C_1)$
\cite[Cor.~3.1]{NielsenZhou}; since each packet $\psi_{1,\dots,1}$ on the
high-pass branch has $L^2$ norm one, $M_2^1(M,\psi)=1$, so
\begin{equation}\label{eq:P2-normalization}
        \rho_2(C_1)=\sqrt2,
        \qquad\text{equivalently}\qquad P(2)=\log2.
\end{equation}

The goal is to show \eqref{eq:NZ-necessary-1} fails strictly whenever
$p\neq2$. This reduces to two facts: (i) $P$ has no affine segment on
$(0,\infty)$, proved in Sections~\ref{sec:FK} and~\ref{sec:D4}; and (ii)
$P(2)=\log2$, which is \eqref{eq:P2-normalization}. Granting both, set
$F(u):=uP(1/u)$ for $0<u<1$. Notice that for each $n$, $\frac1n\log\sum_{\omega}\|B_\omega\|^s$ is a log-sum-exp of the linear functions $s\mapsto s\log\|B_\omega\|$, hence convex in $s$; since $P(s)$ is the limit of these convex functions, $P$ is itself convex.  
With $u=1/p$,
\begin{equation*}
        \log\Bigl(\frac{\rho_p(C_1)\rho_{p'}(C_1)}2\Bigr)
        =F(1/p)+F(1-1/p)-\log2,
\end{equation*}
and since $u\cdot(1/u)+(1-u)\cdot(1/(1-u))=2$, gives
\begin{equation*}
        F(u)+F(1-u)
        =uP(1/u)+(1-u)P(1/(1-u))
        \geq P(2)=\log2,
\end{equation*}
with equality only if $P$ is affine on the interval with endpoints $1/u$ and
$1/(1-u)$. If $P$ has no affine segment, equality thus occurs only at
$u=1/2$, i.e.\ $p=2$, so
\begin{equation*}
        \rho_p(C_1)\rho_{p'}(C_1)>2,
        \qquad 1<p<\infty,\quad p\neq2,
\end{equation*}
and by \eqref{eq:NZ-necessary-1} the wavelet packet system cannot be a
Schauder basis of $L^p(\R)$ for $p\neq2$. To establish (i), we combine
Feng--K\"aenm\"aki's uniqueness theorem with an exact
spectral-radius separation obtained in Section~\ref{sec:D4}.

\subsection{A Theorem by Feng--K\"aenm\"aki}\label{sec:FK}

Let $M_d(\R)$ denote the vector space of $d\times d$ real matrices, and let
$\A=\{A_1,\dots,A_\ell\}\subset M_d(\R)$. For a word $I=i_1\cdots i_n$, write
$A_I=A_{i_n}\cdots A_{i_1}$. Define
\begin{equation}\label{eq:general-pressure}
        P_\A(q):=\lim_{n\to\infty}\frac1n
        \log\sum_{I\in\{1,\dots,\ell\}^n}\norm{A_I}^{q},
        \qquad q>0.
\end{equation}
The limit exists by subadditivity. The family $\A$ is \emph{irreducible} if
there is no nonzero proper linear subspace of $\R^d$ invariant under every
$A_i$; this is the same notion of irreducibility used throughout the
matrix-pressure literature, e.g.\ \cite[Def.~1.1]{FengKaenmaki}.

We use the following theorem of Feng and K\"aenm\"aki~\cite[Prop.~1.2]
{FengKaenmaki}, stated in precisely the norm-pressure convention
\eqref{eq:general-pressure}. Let $\Sigma=\{1,\ldots,\ell\}^{\N}$, and let
$\mathcal M_\sigma(\Sigma)$ denote the shift-invariant Borel probability
measures on $\Sigma$. The variational principle has the form
\begin{equation}\label{eq:variational}
        P_\A(q)=\sup_{\mu\in\mathcal M_\sigma(\Sigma)}
        \bigl(h(\mu)+q\Lambda(\mu)\bigr),
\end{equation}
where $h(\mu)$ is entropy and $\Lambda(\mu)$ is the top Lyapunov exponent
associated with the matrix cocycle. A measure attaining the supremum in
\eqref{eq:variational} is called a \emph{$q$-equilibrium state}.

\begin{theorem}[{\cite[Prop.~1.2]{FengKaenmaki}}]\label{thm:FK}
Let $\A=\{A_1,\dots,A_\ell\}\subset M_d(\R)$ be irreducible. Then for every
$q>0$ the norm pressure $P_\A(q)$ in \eqref{eq:general-pressure} has a
unique $q$-equilibrium state $\mu_q$. Moreover, $\mu_q$ satisfies the Gibbs
estimate: there exists $C_q\geq1$ such that for every word $I$ of length $n$,
\begin{equation}\label{eq:gibbs}
        C_q^{-1}e^{-nP_\A(q)}\norm{A_I}^{q}
        \leq \mu_q([I])\leq
        C_q e^{-nP_\A(q)}\norm{A_I}^{q}.
\end{equation}
Furthermore $P_\A$ is differentiable on $(0,\infty)$ and
\begin{equation}\label{eq:FK-derivative}
        P_\A'(q)=\Lambda(\mu_q),\qquad q>0.
\end{equation}
\end{theorem}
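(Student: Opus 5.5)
The plan is the standard thermodynamic-formalism route for irreducible matrix cocycles. The aim is to build, for fixed $q>0$, a Gibbs measure on $\Sigma$ from the quasi-multiplicativity of $I\mapsto\norm{A_I}^q$, to identify it as the unique equilibrium state, and then to deduce differentiability of $P_\A$ from that uniqueness.

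\textbf{Step 1: quasi-multiplicativity.} Irreducibility gives a finite set of words $\Gamma$ and a constant $c>0$ such that for all words $I,J$ there is some $K\in\Gamma$ with
\[
\norm{A_{IKJ}}\ge c\,\norm{A_I}\norm{A_J}.
\]
I expect this to be the main obstacle. I would argue by contradiction and compactness:
\begin{itemize}[leftmargin=*]
\item Suppose the bound fails. Normalize $A_I/\norm{A_I}$ and $A_J/\norm{A_J}$ and pass to limits $X\neq0$ and $Y\neq0$, where $X,Y$ lie in the closure of the normalized semigroup.
\item The failure then yields $Y A_K X=0$ for every word $K$ (with the product order dictated by the paper's convention $A_I=A_{i_n}\cdots A_{i_1}$).
\item Hence the span of $\{A_K x : x\in\operatorname{range} X,\ K\}$ is a nonzero invariant subspace contained in $\ker Y$, so it is proper. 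This contradicts irreducibility.
\item Finally, a standard finiteness argument (the $\R$-span of the semigroup is finite-dimensional) lets the connecting words $K$ be drawn from a fixed finite set $\Gamma$.
\end{itemize}
Submultiplicativity gives the matching upper bound $\norm{A_{IJ}}\le\norm{A_I}\norm{A_J}$.

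\textbf{Step 2: existence of a Gibbs measure.} Put $Z_n=\sum_{|I|=n}\norm{A_I}^q$.
\begin{itemize}[leftmargin=*]
\item Submultiplicativity and quasi-multiplicativity, the latter with length adjustments of at most $\max_{K\in\Gamma}|K|$, give $Z_n\asymp e^{nP_\A(q)}$ up to constants.
\item Define $\nu_n$ to weight the cylinder $[I]$, $|I|=n$, by $\norm{A_I}^q/Z_n$. Average over shifts, $\frac1n\sum_{k<n}\nu_n\circ\sigma^{-k}$, and take a weak-$*$ limit point $\mu$.
\item The two-sided multiplicativity bounds, summed over extensions, give the Gibbs estimate \eqref{eq:gibbs} for $\mu$.
\end{itemize}

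\textbf{Step 3: $\mu$ is the unique equilibrium state.}
\begin{itemize}[leftmargin=*]
\item The Gibbs property gives $h(\mu)+q\Lambda(\mu)=P_\A(q)$ directly, via Shannon--McMillan--Breiman and Kingman applied to $-\log\mu([x|_n])$.
\item The Gibbs property also implies $\mu$ is ergodic: quasi-multiplicativity gives $\mu([I]\cap\sigma^{-m}[J])\ge c'\mu([I])\mu([J])$ for suitable gaps $m$.
\item For any other equilibrium state $\eta$, I would first reduce to ergodic $\eta$ using the ergodic decomposition and affinity of $h+q\Lambda$.
\item If $\eta\neq\mu$, then $\eta\perp\mu$. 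The usual Bowen argument then applies: choose sets of $n$-cylinders carrying $\eta$ but of small $\mu$-mass, and compare $\sum\eta([I])\log(\norm{A_I}^q e^{-nP}/\eta([I]))$ with the Gibbs bounds. This forces $h(\eta)+q\Lambda(\eta)<P_\A(q)$, a contradiction.
\end{itemize}

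\textbf{Step 4: differentiability and \eqref{eq:FK-derivative}.}
\begin{itemize}[leftmargin=*]
\item $P_\A$ is convex, as the pointwise limit of convex functions. By \eqref{eq:variational}, $P_\A$ is a supremum of affine functions $q\mapsto h(\mu)+q\Lambda(\mu)$.
\item Every subgradient of $P_\A$ at $q$ equals $\Lambda(\nu)$ for some $q$-equilibrium state $\nu$. To see this, take weak-$*$ limits of equilibrium states at $q_k\downarrow q$ and $q_k\uparrow q$, using upper semicontinuity of $h$ and of $\Lambda$ (on the full shift, $\Lambda$ is an infimum of continuous functions).
\item Uniqueness of the equilibrium state then forces the one-sided derivatives to coincide, and both equal $\Lambda(\mu_q)$.
\end{itemize}
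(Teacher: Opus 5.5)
The paper does not prove this statement. It imports it as a black box from Feng--K\"aenm\"aki and uses only its conclusions, in Proposition~\ref{prop:affine-rigidity}. Your outline is a correct reconstruction of the standard argument behind the cited result: quasi-multiplicativity from irreducibility by compactness, a shift-averaged weak-$*$ Gibbs limit, Bowen's singularity argument for uniqueness, and differentiability from uniqueness via semicontinuity. So the comparison is between citing and proving. What your route buys is that irreducibility visibly enters only through the real subspace $\operatorname{span}\{A_Kx\}\subseteq\ker Y$ of Step~1. The theorem therefore comes out exactly in the real-irreducibility form stated here, without relying on how the source phrases its hypotheses. In Step~1, take the negation with $\Gamma_N=\{K:|K|\le N\}$ and $c=1/N$; the separate finiteness bullet then becomes unnecessary.

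Two steps deserve to be spelled out rather than summarized.

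\emph{The Gibbs lower bound.} The connectors $K\in\Gamma$ have varying lengths, so they shift both the position of $I$ and the total length. Pigeonhole over $\Gamma\times\Gamma$, then use $T(k,j+s)\le Z_s\,T(k,j)$, where $T(k,j)=\sum_{|U|=k,\,|W|=j}\norm{A_{UIW}}^q$. This returns you to length exactly $n$ with $I$ displaced by at most $\max_{K\in\Gamma}|K|$ positions, which the Ces\`aro average absorbs.

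\emph{Step~4.} Write $P=P_\A$. For $q_k\downarrow q$, upper semicontinuity of $\Lambda$ gives $\Lambda(\nu)\ge P'_+(q)$. For $q_k\uparrow q$, it is upper semicontinuity of $h$, applied to $h(\mu_{q_k})=P(q_k)-q_k\Lambda(\mu_{q_k})$, that gives $\Lambda(\nu')\le P'_-(q)$. Both semicontinuities are also needed to see that $\nu,\nu'$ are $q$-equilibrium states. Combining this with the subgradient inequalities and uniqueness yields $P'_-(q)=P'_+(q)=\Lambda(\mu_q)$.
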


For a general, not necessarily irreducible, family, Feng and K\"aenm\"aki
show only that there are at most $d$ ergodic $q$-equilibrium states
\cite[Thm.~1.7]{FengKaenmaki}. However, the irreducibility of
$\{B_0,B_1\}$, established in Proposition~\ref{prop:irreducible} below,
provides the uniqueness needed here.

The following consequence is the central thermodynamic observation of this
note: on any interval where the pressure is affine, \emph{every} word,
periodic or not, must grow at the same normalized rate.

\begin{proposition}
\label{prop:affine-rigidity}
Let \(A=\{A_1,\ldots,A_\ell\}\subset M_d(\mathbb R)\) be an irreducible family of
invertible matrices. Suppose that \(P_A\) is affine on a nondegenerate interval
\([q_0,q_1]\subset(0,\infty)\), say
\[
        P_A(q)=a+bq,\qquad q\in[q_0,q_1].
\]
Then every word \(I\) satisfies
\[
        \rho(A_I)^{1/|I|}=e^b .
\]
\end{proposition}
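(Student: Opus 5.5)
The plan is to combine the uniqueness of equilibrium states with the Gibbs estimate of Theorem~\ref{thm:FK}. Fix two distinct points $q<q'$ in the open interval $(q_0,q_1)$. The aim is to show that $\mu_q=\mu_{q'}$. Once that is known, dividing the two Gibbs estimates \eqref{eq:gibbs} will force $\norm{A_I}$ to be comparable to $e^{b|I|}$ uniformly over all words $I$.

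\emph{Step 1: the two equilibrium states coincide.} Since $P_A$ is affine on $[q_0,q_1]$, the derivative formula \eqref{eq:FK-derivative} gives $\Lambda(\mu_{q'})=P_A'(q')=b$. Since $\mu_{q'}$ is a $q'$-equilibrium state, $h(\mu_{q'})+q'b=a+bq'$, so $h(\mu_{q'})=a$. Hence
\[
        h(\mu_{q'})+q\Lambda(\mu_{q'})=a+bq=P_A(q),
\]
so $\mu_{q'}$ attains the supremum in \eqref{eq:variational} at parameter $q$. The uniqueness part of Theorem~\ref{thm:FK}, which applies because $A$ is irreducible, then gives $\mu_{q'}=\mu_q$. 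The only thing to check here is that $\Lambda(\mu_{q'})$ is finite. This holds because $\Lambda(\mu)\le\log\max_i\norm{A_i}$, and invertibility of the $A_i$ also gives a finite lower bound.

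\emph{Step 2: uniform comparability.} Let $\mu=\mu_q=\mu_{q'}$ and let $I$ be any word of length $n$. Applying \eqref{eq:gibbs} at $q$ and at $q'$ to $\mu([I])$ and dividing the two estimates gives
\[
        (C_qC_{q'})^{-1}\le e^{-n(P_A(q)-P_A(q'))}\norm{A_I}^{q-q'}\le C_qC_{q'}.
\]
Now use $P_A(q)-P_A(q')=b(q-q')$ and take the $(q-q')$-th root, remembering that $q-q'$ is negative, so the inequalities flip. The result is a constant $K\ge1$, independent of $I$, with
\[
        K^{-1}e^{bn}\le\norm{A_I}\le Ke^{bn}\qquad\text{for all words } I \text{ of length } n.
\]

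\emph{Step 3: passing to spectral radii.} Fix a word $I$ and apply the bound of Step 2 to the concatenated word $I^k$, which has length $k|I|$ and satisfies $A_{I^k}=A_I^k$. Taking $k$-th roots and using Gelfand's formula gives
\[
        \rho(A_I)=\lim_{k\to\infty}\norm{A_I^k}^{1/k}=e^{b|I|},
\]
because $K^{\pm1/k}\to1$. This is exactly the claim of Proposition~\ref{prop:affine-rigidity}.

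I expect Step 1 to be the main obstacle: it is the one place where irreducibility is used essentially, and one must make sure the derivative formula and the variational principle of Theorem~\ref{thm:FK} are applied at interior points of the interval. Steps 2 and 3 are routine bookkeeping once Step 1 is in place.
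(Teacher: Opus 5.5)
Your proposal is correct and follows essentially the same route as the paper. You use the derivative formula and the variational principle to show that a single equilibrium state is optimal at both exponents, invoke uniqueness to get $\mu_q=\mu_{q'}$, divide the two Gibbs estimates to obtain $\norm{A_I}\asymp e^{b|I|}$ uniformly in $I$, and finish with Gelfand's formula along the powers $I^k$. The one point worth stating explicitly, as the paper does, is that invertibility gives $\norm{A_I}>0$ and hence $\mu([I])>0$, which is what makes dividing the Gibbs inequalities legitimate.
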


\begin{proof}
Choose \(q_0<r<s<q_1\). Let \(\mu_r\) be the unique \(r\)-equilibrium state.
Since \(P_A(q)=a+bq\) on \([q_0,q_1]\), its derivative at \(r\) is \(b\).
By (2.6),
\[
        \Lambda(\mu_r)=P_A'(r)=b.
\]
Because \(\mu_r\) is an \(r\)-equilibrium state,
\[
        h(\mu_r)+r\Lambda(\mu_r)=P_A(r)=a+br,
\]
and hence \(h(\mu_r)=a\). Therefore, for every \(q\in[q_0,q_1]\),
\[
        h(\mu_r)+q\Lambda(\mu_r)=a+qb=P_A(q).
\]
Thus \(\mu_r\) is a \(q\)-equilibrium state throughout the interval. By uniqueness,
all equilibrium states on the interval coincide; in particular \(\mu_r=\mu_s=:\mu\).

Apply the Gibbs estimate \eqref{eq:gibbs} at the two exponents $r$ and $s$ to
a word $I$ of length $n$. Since the measures $\mu=\mu_r=\mu_s$ are the same,
and since $A_I$ is nonsingular, dividing the two inequalities gives a constant
$K\geq1$, independent of $I$ and $n$, such that
\begin{equation*}
        K^{-1}e^{n(P_\A(s)-P_\A(r))}
        \leq \norm{A_I}^{s-r}
        \leq K e^{n(P_\A(s)-P_\A(r))}.
\end{equation*}
Because $P_\A(s)-P_\A(r)=b(s-r)$, this becomes
\begin{equation}\label{eq:word-norm-uniform}
        K^{-1/(s-r)}e^{nb}
        \leq \norm{A_I}
        \leq K^{1/(s-r)}e^{nb}.
\end{equation}
Now fix a word $I$  and apply \eqref{eq:word-norm-uniform} to
$I^m$, the $m$-fold repetition of $I$. Since $A_{I^m}=A_I^m\not=0$ and
$|I^m|=m|I|$, we obtain
\begin{equation*}
        K^{-1/(s-r)}e^{m|I|b}
        \leq \norm{A_I^m}
        \leq K^{1/(s-r)}e^{m|I|b}.
\end{equation*}
Taking the power $1/(m|I|)$ and letting $m\to\infty$, the Gelfand formula
gives
\begin{equation*}
        \spr(A_I)^{1/|I|}=e^b.
\end{equation*}
This proves the claim.
\end{proof}

Consequently, to prove that $P_\A$ has \textit{no affine segment}, it is enough to
find two words $I,J$ with different normalized spectral radii:
\begin{equation*}
        \spr(A_I)^{1/|I|}\neq \spr(A_J)^{1/|J|}.
\end{equation*}

\section{The Daubechies length-four matrices and  affine
segments}\label{sec:D4}

For the length-four Daubechies low-pass filter~\cite[Ch.~6]{Daubechies},
\begin{equation*}
        a_0(0)=\frac{1+\sqrt3}{4},\quad
        a_0(1)=\frac{3+\sqrt3}{4},\quad
        a_0(2)=\frac{3-\sqrt3}{4},\quad
        a_0(3)=\frac{1-\sqrt3}{4}.
\end{equation*}
This is the endpoint $\theta=(1+\sqrt3)/4$ of the one-parameter family of
length-four quadrature mirror filters considered in
\cite[Example~2]{NielsenZhou}. 
Following the shifted high-pass convention used in Nielsen--Zhou
\cite[Example 2]{NielsenZhou} for the subdivision matrices entering \(C_1\),
we define
\[
        a_1(k)=(-1)^k a_0(k).
\]
so
\begin{equation*}
        a_1(0)=\frac{1+\sqrt3}{4},\quad
        a_1(1)=-\frac{3+\sqrt3}{4},\quad
        a_1(2)=\frac{3-\sqrt3}{4},\quad
        a_1(3)=\frac{\sqrt3-1}{4}.
\end{equation*}
With the convention \eqref{eq:CepsDef} for $\varepsilon=1$,
\begin{equation*}
        (B_\eta)_{\alpha\beta}=a_1(\eta+2\alpha-\beta),
        \qquad 0\leq\alpha,\beta\leq2,
\end{equation*}
where $a_1(k)=0$ outside $\{0,1,2,3\}$, the two matrices $C_1=\{B_0,B_1\}$
are
\begin{equation*}
B_0=
\begin{pmatrix}
\frac{1+\sqrt3}{4}&0&0\\[3pt]
\frac{3-\sqrt3}{4}&-\frac{3+\sqrt3}{4}&\frac{1+\sqrt3}{4}\\[3pt]
0&\frac{\sqrt3-1}{4}&\frac{3-\sqrt3}{4}
\end{pmatrix},
\end{equation*}
\begin{equation*}
B_1=
\begin{pmatrix}
-\frac{3+\sqrt3}{4}&\frac{1+\sqrt3}{4}&0\\[3pt]
\frac{\sqrt3-1}{4}&\frac{3-\sqrt3}{4}&-\frac{3+\sqrt3}{4}\\[3pt]
0&0&\frac{\sqrt3-1}{4}
\end{pmatrix}.
\end{equation*}
They are both invertible:
\begin{equation*}
        \det B_0=-\frac{1+\sqrt3}{8},
        \qquad
        \det B_1=\frac{1-\sqrt3}{8}.
\end{equation*}
In particular $\{B_0,B_1\}$ is a non-trivial family in the sense of
\cite{FengKaenmaki}, so the pressure $P$ never takes the value $-\infty$ and
Theorem~\ref{thm:FK} applies once irreducibility is verified. We now verify
that the pair $\{B_0,B_1\}$ is irreducible.

\begin{proposition}\label{prop:irreducible}
The pair $\{B_0,B_1\}$ is irreducible over $\R^3$.
\end{proposition}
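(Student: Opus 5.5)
The plan is to exclude invariant subspaces of dimension one and of dimension two separately; in $\R^3$ these are the only possibilities. A two-dimensional subspace $W$ is invariant under $B_0$ and $B_1$ exactly when its orthogonal complement, a line, is invariant under $B_0^T$ and $B_1^T$. So both cases reduce to one statement: there is no common real eigenvector, first for the pair $\{B_0,B_1\}$ and then for the pair $\{B_0^T,B_1^T\}$.

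\emph{Step 1: no common eigenvector of $B_0,B_1$.} $B_0$ is block lower triangular: its first row is $(\tfrac{1+\sqrt3}4,0,0)$. Its eigenvalues are therefore $\tfrac{1+\sqrt3}{4}$ together with the two eigenvalues of the lower-right $2\times2$ block
\[
\begin{pmatrix}-\frac{3+\sqrt3}4&\frac{1+\sqrt3}4\\[2pt] \frac{\sqrt3-1}4&\frac{3-\sqrt3}4\end{pmatrix}.
\]
This block has trace $-\sqrt3/2$ and determinant $-\tfrac12$. Its discriminant is positive, so both eigenvalues are real and distinct. I will check that neither equals $\tfrac{1+\sqrt3}4$, which makes $B_0$ diagonalizable with three one-dimensional eigenspaces. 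I will then write down the three eigenvectors explicitly.
\begin{itemize}
\item Two of them have first coordinate $0$. They are $(0,v)$, where $v$ is an eigenvector of the $2\times2$ block.
\item The third has nonzero first coordinate.
\end{itemize}
For each of the three vectors $x$ I will compute $B_1x$ and check that it is not parallel to $x$. For the vectors $(0,v)$ this is easy. The third row of $B_1$ is $(0,0,\tfrac{\sqrt3-1}4)$ and the first row is $(-\tfrac{3+\sqrt3}4,\tfrac{1+\sqrt3}4,0)$. So $B_1(0,v_1,v_2)$ has first coordinate $\tfrac{1+\sqrt3}4v_1$, and this must vanish for parallelism; that forces $v_1=0$. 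But $(0,0,1)$ is not an eigenvector of the block, since the block's $(1,2)$ entry $\tfrac{1+\sqrt3}4$ is nonzero. For the remaining eigenvector of $B_0$, one comparison of coordinate ratios in $\Q(\sqrt3)$ settles it.

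\emph{Step 2: no common eigenvector of $B_0^T,B_1^T$.} This runs the same way with $B_1^T$ in place of $B_0$. $B_1^T$ is block triangular, because the third column of $B_1^T$, which is the third row of $B_1$, is $(0,0,\tfrac{\sqrt3-1}4)$. So $e_3$ is invariant in a suitable sense and the spectrum again splits as a $1+2$ structure. Alternatively I will work directly with $B_0^T$: since $B_0^T$ is upper block triangular with the first column of $B_0^T$ equal to $(\tfrac{1+\sqrt3}4,0,0)^T$, the vector $e_1$ is an eigenvector of $B_0^T$. I then check that $B_1^Te_1=(-\tfrac{3+\sqrt3}4,\tfrac{1+\sqrt3}4,0)$ is not parallel to $e_1$. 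The other two eigenvectors of $B_0^T$ come from the transposed $2\times2$ block, extended to $\R^3$ by solving for the first coordinate. I test each of them against $B_1^T$ in the same way as in Step 1.

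A cleaner alternative, which I would try first to avoid messy surds, is to find a word in $B_0,B_1$ with three distinct real eigenvalues and no eigenvector having a zero coordinate pattern compatible with the other matrix. Another option is to use the criterion that a pair has no common invariant subspace if the algebra generated by $I,B_0,B_1,B_0B_1,\dots$ has dimension $9$. This reduces to showing that $9$ explicit products are linearly independent, i.e.\ a single nonzero determinant over $\Q(\sqrt3)$. The main obstacle is purely computational: keeping the arithmetic in $\Q(\sqrt3)$ exact when verifying non-parallelism for the eigenvector with irrational coordinates. I expect this to be where care is needed; the structural reduction itself is routine.
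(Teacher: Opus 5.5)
Your argument is correct, but its main line differs from the paper's. The paper never looks at invariant subspaces directly. It checks that the nine words $I,B_0,B_1,B_0^2,B_0B_1,B_1B_0,B_1^2,B_0^2B_1,B_0B_1B_0$ are linearly independent (one $9\times9$ determinant over $\mathbb{Q}(\sqrt3)$), so the unital algebra they generate is $M_3(\R)$; this is the option you list last. You instead rule out invariant lines and invariant planes separately, via common eigenvectors of $\{B_0,B_1\}$ and of $\{B_0^T,B_1^T\}$, using the triangular zero patterns. Your Step~1 checks out: the $\tfrac{1+\sqrt3}4$-eigenvector of $B_0$ is $(11,\,30-16\sqrt3,\,15-8\sqrt3)^T$, and its image under $B_1$ is not parallel to it.

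The two routes trade off differently. The paper's is a single mechanical computation that essentially needs computer algebra, but it gives the stronger conclusion that there is no invariant subspace even over $\mathbb{C}$. Yours can be checked by hand and proves exactly the real irreducibility that Theorem~\ref{thm:FK} requires.

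One imprecision: in Step~2 the two remaining eigenvectors of $B_0^T$ belong to $(-\sqrt3\pm\sqrt{11})/4$. They have no zero coordinates and entries in $\mathbb{Q}(\sqrt3,\sqrt{11})$, so they cannot be handled ``the same way as in Step~1''. A brute-force check does succeed, but there is a much shorter argument.
\begin{itemize}
\item Row~1 of $B_0^T$ equals row~2 of $B_1^T$, namely $(a_1(0),a_1(2),0)$. Row~2 of $B_0^T$ equals row~3 of $B_1^T$, namely $(0,a_1(1),a_1(3))$.
\item Hence $B_0^Ty=\lambda y$ and $B_1^Ty=\lambda' y$ (with $\lambda,\lambda'\neq0$ by invertibility) force $\lambda y_1=\lambda' y_2$ and $\lambda y_2=\lambda' y_3$. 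So $y\parallel(r,1,1/r)$ with $r=\lambda'/\lambda$.
\item Comparing row~1 of $B_1^T$ with row~2 of $B_0^T$ gives $\lambda' r=\lambda r$, so $\lambda'=\lambda$ and $y\parallel(1,1,1)^T$.
\item But $B_0^T(1,1,1)^T=(1,-1,1)^T$, which is not parallel to $(1,1,1)^T$.
\end{itemize}
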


\begin{proof}
It is enough to show that the unital algebra generated by $B_0$ and $B_1$ is
all of $M_3(\R)$. Indeed, if a nonzero proper subspace were invariant under
both matrices, then it would be invariant under every element of the
generated algebra, which is impossible for the full matrix algebra.

Vectorize $3\times3$ matrices by stacking their columns, and set
\begin{equation*}
\mathcal B=\bigl(I,B_0,B_1,B_0^2,B_0B_1,B_1B_0,B_1^2,B_0^2B_1,B_0B_1B_0\bigr).
\end{equation*}
A direct computation gives
\begin{equation*}
\det\bigl[\operatorname{vec}(M):M\in\mathcal B\bigr]
=\frac{3(15-7\sqrt3)}{4096}\neq0,
\end{equation*}
up to the sign coming from the convention used to order the nine
vectorized matrices. Since this determinant is nonzero, the nine matrices in
$\mathcal B$ are linearly independent. Hence they form a basis for
$M_3(\R)$, and the algebra generated by $B_0,B_1$ is all of $M_3(\R)$. Thus
the pair is irreducible.
\end{proof}

\subsection{Two exact spectral radii}

\begin{proposition}\label{prop:two-rates}
The words $B_0$ and $G:=B_0B_1^2$ have different normalized spectral radii.
More precisely,
\begin{equation}\label{eq:B0-radius}
        \spr(B_0)=\frac{\sqrt3+\sqrt{11}}4>1,
\end{equation}
whereas
\begin{equation}\label{eq:G-radius}
        \spr(G)<1.
\end{equation}
Consequently,
\begin{equation}\label{eq:different-rates}
        \spr(B_0)\neq \spr(B_0B_1^2)^{1/3}.
\end{equation}
\end{proposition}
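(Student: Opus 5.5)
The plan is to prove \eqref{eq:B0-radius} from the block structure of $B_0$. To prove \eqref{eq:G-radius}, I would compute the characteristic polynomial of $G$ exactly in $\mathbb Q(\sqrt3)$ and apply a stability test for cubics. Then \eqref{eq:different-rates} follows at once: $\spr(B_0)>1>\spr(G)^{1/3}$.

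For $B_0$: its first row is $\bigl(\frac{1+\sqrt3}4,0,0\bigr)$, so $B_0$ is block lower triangular. Its spectrum therefore consists of $\frac{1+\sqrt3}4\approx0.683$ together with the eigenvalues of the lower-right $2\times2$ block
\begin{equation*}
\begin{pmatrix}-\frac{3+\sqrt3}4&\frac{1+\sqrt3}4\\[3pt] \frac{\sqrt3-1}4&\frac{3-\sqrt3}4\end{pmatrix}.
\end{equation*}
This block has trace $-\frac{\sqrt3}2$ and determinant $-\frac{6}{16}-\frac{2}{16}=-\frac12$. Its characteristic polynomial is therefore $\lambda^2+\frac{\sqrt3}2\lambda-\frac12$, with roots $\frac{-\sqrt3\pm\sqrt{11}}4$. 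The root of largest modulus is $-\frac{\sqrt3+\sqrt{11}}4$, of modulus $\approx1.262$. This exceeds both $1$ and $0.683$, which gives \eqref{eq:B0-radius}. As a consistency check, the product of the three eigenvalues is $\frac{1+\sqrt3}4\cdot(-\frac12)=\det B_0$.

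For $G=B_0B_1^2$: I would first compute $B_1^2$ and then $G$ entrywise as elements $x+y\sqrt3$ with $x,y\in\mathbb Q$. From these entries I would extract the coefficients of $\chi_G(\lambda)=\lambda^3+c_2\lambda^2+c_1\lambda+c_0$: $c_2=-\operatorname{tr}G$, $c_1$ the sum of the principal $2\times2$ minors, and $c_0=-\det G$. The constant term is already available from the paper's determinants, since $\det G=\det B_0(\det B_1)^2=-\frac{1+\sqrt3}8\cdot\frac{(1-\sqrt3)^2}{64}$, so $|c_0|$ is tiny. I would then apply the Jury (Schur--Cohn) criterion for a real monic cubic. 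All roots lie in the open unit disk if and only if
\begin{equation*}
\chi_G(1)>0,\qquad -\chi_G(-1)>0,\qquad |c_0|<1,\qquad |1-c_0^2|>|c_1-c_0c_2|.
\end{equation*}
Each quantity is an explicit element $x+y\sqrt3$. Its sign can be certified rigorously, either by rational enclosures such as $1.732<\sqrt3<1.7321$ or by comparing $x^2$ with $3y^2$ when $x$ and $y$ have opposite signs. As a sanity check, and a fallback, I would compute $\|G\|$ in a convenient norm, or $\|G^k\|$ for small $k$, and check that it is below $1$; if that happens, it also yields $\spr(G)<1$ directly.

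The main obstacle is purely computational: obtaining the entries of $G$ and the coefficients $c_2,c_1$ without arithmetic slips. Two features help. Both $B_0$ and $B_1$ are sparse, and $B_1$ is block upper triangular, since its last row is $\bigl(0,0,\frac{\sqrt3-1}4\bigr)$. I would cross-check $\det G$ computed from the entries against the product formula above. The final Jury inequalities should hold with a comfortable margin, so crude rational bounds for $\sqrt3$ will suffice.
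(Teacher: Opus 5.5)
Your proposal is correct. For $B_0$ it is the paper's argument: the block lower triangular shape is exactly where the factorization $\chi_{B_0}(x)=\bigl(x-\frac{1+\sqrt3}4\bigr)\bigl(x^2+\frac{\sqrt3}2x-\frac12\bigr)$ comes from. For $G$ your computation leads to the same cubic as \eqref{eq:char-G}; the difference is only in how $\spr(G)<1$ is certified.

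The paper evaluates $q$ at six rational points and finds three sign changes inside $(-1,1)$. It then uses the intermediate value theorem and $\deg q=3$ to conclude that all eigenvalues are real and lie in $(-\frac12,-\frac14)$, $(0,\frac1{50})$, $(\frac34,1)$. You instead apply the Jury--Schur--Cohn test, and here it is almost free:
$\chi_G(1)=\frac{3(18-5\sqrt3)}{128}>0$, $-\chi_G(-1)=\frac{111+24\sqrt3}{128}>0$, $|c_0|=\frac{\sqrt3-1}{256}<1$, and $c_1-c_0c_2=\frac{-2904+290\sqrt3}{8192}$. Hence $|c_1-c_0c_2|<\frac12<1-c_0^2$.

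The paper's route yields more: a real spectrum, a location for each eigenvalue, and $\spr(G)\approx0.84$. But it needs well-chosen test points such as $\frac1{50}$ to isolate the tiny root near $0.01$. Your route needs no guessing and does not care whether the eigenvalues are real. It is the more mechanical certificate, which matters for the longer filters mentioned in the closing remark.

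One caution about your fallback: it fails at $k=1$. The first row of $G$ already has Euclidean length about $1.25$, and $\norm{G}_1\approx1.74$, $\norm{G}_\infty\approx2$. A norm bound would therefore need a fairly large power or an adapted norm; the Jury step is what carries the proof.
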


\begin{proof}
The characteristic polynomial of $B_0$ factors as
\begin{equation*}
        \chi_{B_0}(x)=
        \left(x-\frac{1+\sqrt3}{4}\right)
        \left(x^2+\frac{\sqrt3}{2}x-\frac12\right).
\end{equation*}
The roots of the quadratic factor are $(-\sqrt3+\sqrt{11})/4$ and
$-(\sqrt3+\sqrt{11})/4$, and the latter has the largest modulus among the
three eigenvalues of $B_0$ and \eqref{eq:B0-radius} follows. 

Now set $G=B_0B_1^2$. A direct calculation gives
\begin{equation}\label{eq:char-G}
\chi_G(x)
=x^3-\frac{7+5\sqrt3}{32}x^2
+\frac{-91+9\sqrt3}{256}x
+\frac{\sqrt3-1}{256}.
\end{equation}
Let $q(x)$ denote the right-hand side of \eqref{eq:char-G}. We record the
following exact signs:
\begin{align*}
q(-1/2)&=-\frac{3(1+9\sqrt3)}{512}<0,
& q(-1/4)&=\frac{3(19-5\sqrt3)}{1024}>0,\\
q(0)&=\frac{\sqrt3-1}{256}>0,&q(1/50)&=\frac{3(-29587+12125\sqrt3)}{8000000}<0,\\
q(3/4)&=\frac{29-59\sqrt3}{1024}<0,
& q(1)&=\frac{3(18-5\sqrt3)}{128}>0.
\end{align*}
Therefore $q$ has one real zero in each of the intervals
\begin{equation*}
        \left(-\frac12,-\frac14\right),
        \qquad
        \left(0,\frac1{50}\right),
        \qquad
        \left(\frac34,1\right).
\end{equation*}
Since $q$ is cubic, these are \textit{all the zeros}. All eigenvalues of $G$
therefore have modulus strictly less than $1$, and \eqref{eq:G-radius}
follows. Combining \eqref{eq:B0-radius} and \eqref{eq:G-radius} gives
\eqref{eq:different-rates}.
\end{proof}

This in turn implies that the pressure has no affine segment, as recorded in
the following lemma.

\begin{lemma}\label{thm:no-affine}
Let $P$ be the pressure \eqref{eq:pressure-intro} of the Daubechies
length-four high-pass matrix pair $\{B_0,B_1\}$. Then $P$ has no affine
segment on $(0,\infty)$.
\end{lemma}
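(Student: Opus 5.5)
The argument is by contradiction, combining Proposition~\ref{prop:affine-rigidity} with Proposition~\ref{prop:two-rates}. Suppose $P$ were affine on some nondegenerate interval $[q_0,q_1]\subset(0,\infty)$, say $P(q)=a+bq$ there. To invoke Proposition~\ref{prop:affine-rigidity} for the family $\A=\{B_0,B_1\}$ with $d=3$, three hypotheses must be checked, and each is already available.
\begin{enumerate}[label=(\roman*)]
\item The pressure $P$ in \eqref{eq:pressure-intro} coincides with $P_\A$ in \eqref{eq:general-pressure}. Both use the same ordering convention for products, $B_\omega=B_{\omega_n}\cdots B_{\omega_1}$, and the same norm-power sum, so they are literally the same function.
\item The family is irreducible, by Proposition~\ref{prop:irreducible}.
\item Both matrices are invertible, since $\det B_0$ and $\det B_1$ are nonzero by the explicit values computed above.
\end{enumerate}
Hence Theorem~\ref{thm:FK} applies, and so does Proposition~\ref{prop:affine-rigidity}.

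Proposition~\ref{prop:affine-rigidity} then gives $\spr(A_I)^{1/|I|}=e^b$ for every word $I$. Apply this to two specific words.
\begin{itemize}
\item For the length-one word $I=0$ we have $A_I=B_0$, so $\spr(B_0)=e^b$.
\item For a length-three word, note that $A_I=A_{i_3}A_{i_2}A_{i_1}$. Choosing $I=1\,1\,0$ gives $A_I=B_0B_1B_1=G$, so $\spr(G)^{1/3}=e^b$.
\end{itemize}
The spectral radius is invariant under cyclic permutation of the factors, so the exact reading order of the word is immaterial in any case.

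Combining the two identities gives $\spr(B_0)=\spr(B_0B_1^2)^{1/3}$. This contradicts \eqref{eq:different-rates}, since $\spr(B_0)>1>\spr(G)^{1/3}$. Therefore no affine segment exists.

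There is no real obstacle, because all the substantive work is in the earlier propositions. The only points needing care are the bookkeeping ones: that the word convention matches, and that the interval is allowed to be nondegenerate but otherwise arbitrary in $(0,\infty)$. Any affine piece contains a closed nondegenerate subinterval, to which Proposition~\ref{prop:affine-rigidity} applies directly.
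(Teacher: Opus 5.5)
Your proposal is correct and follows the paper's proof essentially verbatim. Irreducibility from Proposition~\ref{prop:irreducible} makes Theorem~\ref{thm:FK}, and hence Proposition~\ref{prop:affine-rigidity}, available; this forces $\spr(B_0)=\spr(B_0B_1^2)^{1/3}$, contradicting Proposition~\ref{prop:two-rates}. Your explicit checks (invertibility of $B_0,B_1$, matching word conventions, and that the word $110$ gives $B_0B_1^2$ up to cyclic permutation) are bookkeeping the paper leaves implicit, and they are all correct.
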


\begin{proof}
By Proposition~\ref{prop:irreducible}, the pair $\{B_0,B_1\}$ is
irreducible. Therefore Theorem~\ref{thm:FK} applies.

Suppose, to obtain a contradiction, that $P$ is affine on a nondegenerate
interval. Proposition~\ref{prop:affine-rigidity} then implies that every
word has the same normalized spectral radius. In particular,
\begin{equation*}
        \spr(B_0)=\spr(B_0B_1^2)^{1/3}.
\end{equation*}
This contradicts Proposition~\ref{prop:two-rates}. Hence $P$ has no affine
segment.
\end{proof}

\section{The main result}\label{sec:main}
We can now prove the main result of this note.

\begin{theorem}\label{thm:main}
For the basic Daubechies length-four wavelet packets, the Schauder-basis
necessary condition \eqref{eq:NZ-necessary-1} fails strictly: for every
$1<p<\infty$ with $p\neq2$,
\begin{equation}\label{eq:main-strict}
        \rho_p(C_1)\rho_{p'}(C_1)>2.
\end{equation}
Consequently, the Daubechies length-four basic wavelet packets do not form a
Schauder basis of $L^p(\R)$ for any $1<p<\infty$, $p\neq2$.
\end{theorem}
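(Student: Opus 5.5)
The proof will assemble the ingredients already established, following the convexity reduction sketched in Section~\ref{sec:setup}. Fix $1<p<\infty$ with $p\neq2$, put $u=1/p\in(0,1)\setminus\{1/2\}$ and $p'=1/(1-u)$. By \eqref{eq:rho-p-pressure},
\begin{equation*}
        \log\frac{\rho_p(C_1)\rho_{p'}(C_1)}{2}
        =uP(1/u)+(1-u)P(1/(1-u))-\log2 .
\end{equation*}
The points $1/u$ and $1/(1-u)$ are distinct and lie in $(1,\infty)$. The convex combination with weights $u,1-u$ of these points is $u\cdot\frac1u+(1-u)\cdot\frac1{1-u}=2$.

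The first step is to record that $P$ is convex on $(0,\infty)$: each $s\mapsto\frac1n\log\sum_\omega\norm{B_\omega}^s$ is a log-sum-exp of linear functions, and pointwise limits of convex functions are convex. Jensen's inequality then gives
\begin{equation*}
        uP(1/u)+(1-u)P(1/(1-u))\ge P(2)=\log2,
\end{equation*}
where the last equality is the normalization \eqref{eq:P2-normalization}.

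The second step is to upgrade this to a strict inequality. For a convex function, equality in the two-point Jensen inequality at distinct points with weights in $(0,1)$ forces the function to be affine on the segment joining them. To see this, let $\ell$ be the chord through $(x_0,P(x_0))$ and $(x_1,P(x_1))$, with $x_0=1/u$ and $x_1=1/(1-u)$. Convexity gives $P\le\ell$ on $[x_0,x_1]$. The function $\ell-P$ is then concave and nonnegative on $[x_0,x_1]$ and vanishes at both endpoints. If it also vanishes at the interior point $2$, concavity forces it to vanish on the whole segment. Such an affine segment is excluded by Lemma~\ref{thm:no-affine}, so the inequality is strict, which is \eqref{eq:main-strict}.

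The final step applies the necessary condition \eqref{eq:NZ-necessary-1}, coming from Lemma~\ref{lem:NZ-necessary} with $\varepsilon=1$. The basic Daubechies wavelet packets form an orthonormal basis of $L^2(\R)$ containing the high-pass branch. They also have dense span in $L^p(\R)$, being compactly supported and containing the scaling function and all wavelet-type combinations. So the hypotheses of that lemma hold, and \eqref{eq:main-strict} shows the necessary condition fails for every $p\neq2$, hence the packets are not a Schauder basis.

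The main obstacle is not inside this assembly, since the strictness rests entirely on Lemma~\ref{thm:no-affine}. The point needing care is the bookkeeping. First, the identity \eqref{eq:rho-p-pressure} must hold for both $p$ and $p'$; both lie in $(1,\infty)$, so this is fine. Second, the hypotheses of Lemma~\ref{lem:NZ-necessary}, namely density of the span and the lower norm bound along the branch, must be justified for the Daubechies packets. For these I will cite \cite{NielsenZhou} directly rather than reprove them.
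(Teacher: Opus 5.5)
Your proposal is correct and follows essentially the same route as the paper's proof. Both write $\log\bigl(\rho_p(C_1)\rho_{p'}(C_1)\bigr)$ via \eqref{eq:rho-p-pressure} as $uP(1/u)+(1-u)P(1/(1-u))$, apply convexity of $P$ at the barycenter $2$ with $P(2)=\log2$, exclude the equality case by Lemma~\ref{thm:no-affine}, and then invoke \eqref{eq:NZ-necessary-1}. Your chord argument (equality in two-point Jensen forces $P$ to be affine on the segment) and your remarks on the hypotheses of Lemma~\ref{lem:NZ-necessary} only spell out what the paper asserts or cites.
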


\begin{proof}
Let $u=1/p$. Then $1-u=1/p'$ and $u\in(0,1)$. By
\eqref{eq:rho-p-pressure},
\begin{equation*}
        \log\bigl(\rho_p(C_1)\rho_{p'}(C_1)\bigr)
        =uP(1/u)+(1-u)P(1/(1-u)).
\end{equation*}
By convexity of $P$,
\begin{equation*}
        uP(1/u)+(1-u)P(1/(1-u))
        \geq P\bigl(u(1/u)+(1-u)(1/(1-u))\bigr)
        =P(2)=\log2.
\end{equation*}
If $p\neq2$, then $u\neq1/2$, so the two points $1/u$ and $1/(1-u)$ are
distinct. Equality in the convexity inequality would force $P$ to be affine
on the interval between these two points. This is impossible by
Lemma~\ref{thm:no-affine}. Therefore the inequality is strict:
\begin{equation*}
        \log\bigl(\rho_p(C_1)\rho_{p'}(C_1)\bigr)>\log2,
\end{equation*}
which is exactly \eqref{eq:main-strict}. The conclusion follows from the
Nielsen--Zhou necessary condition \eqref{eq:NZ-necessary-1}.
\end{proof}

We conclude this note by the following two remarks.
\begin{remark}
Theorem~\ref{thm:main} settles the Schauder-basis-failure clause of
Conjecture~\ref{conj:NZ} for the length-four filter, for the full range
$p\neq2$. The second clause of the conjecture, that the wavelet packets fail
to be uniformly bounded in $p$-mean across scales for every $p>2$, is a
different (and a priori stronger, quantitative) statement about the growth
rate $M_p^1(M,\psi)=2^{-1/p}\rho_p(C_1)$ itself rather than about the product
$\rho_p(C_1)\rho_{p'}(C_1)$, and is not addressed by the present method. To
the best of our knowledge, it remains open.
\end{remark}
\begin{remark}
The qualitative mechanism of Sections~\ref{sec:setup} and~\ref{sec:D4}
carries over unchanged to wavelet filters of arbitrary finite length. However,
the associated computational complexity increases significantly for longer
filters. Often only numerical approximations of the filter coefficients are
available, which adds to the difficulty of reaching a definitive conclusion.
That said, we have performed numerical calculations for Daubechies filters of
length $6$. These computations provide strong evidence for the same conclusion
as in the length-four case, although we do not include a rigorous
interval-arithmetic verification here.

\end{remark}

\begin{small}

\end{small}

\end{document}